\documentclass{article}

\usepackage{amsmath,amsfonts,amssymb,amsthm,tikz}
\usepackage{algorithm}
\usepackage{algorithmic}
\usepackage[utf8]{inputenc}
\usepackage{cite}
\usepackage{enumitem}
\setlist[enumerate,1]{label=(\arabic*)} 

\usepackage{hyperref}

\newcommand{\C}{{\mathbb{C}}}

\newcommand{\Q}{{\mathbb{Q}}}

    \makeatletter
    \let\@fnsymbol\@alph
    \makeatother

\title{Monogenity of Fibonacci polynomials and Lucas polynomials}
\author{Han Chen, Weizhe Guo, Haojie Hong}
\date{\today}

\newtheorem{theorem}{Theorem}[section]
\newtheorem{proposition}[theorem]{Proposition}
\newtheorem{lemma}[theorem]{Lemma}
\newtheorem{corollary}[theorem]{Corollary}
\newtheorem{remark}[theorem]{Remark}
\newtheorem{problem}{Problem}

\numberwithin{equation}{section}

\makeatletter

\renewcommand*\l@section[2]{%
  \ifnum \c@tocdepth >\z@
    \addpenalty\@secpenalty
    \addvspace{0.2em \@plus\p@}%
    \setlength\@tempdima{1.5em}%
    \begingroup
      \parindent \z@ \rightskip \@pnumwidth
      \parfillskip -\@pnumwidth
      \leavevmode \bfseries
      \advance\leftskip\@tempdima
      \hskip -\leftskip
      #1\nobreak\hfil \nobreak\hb@xt@\@pnumwidth{\hss #2}\par
    \endgroup
  \fi}

\makeatother

\allowdisplaybreaks[1]

\begin{document}

\hfuzz 4.3pt

\maketitle

\renewcommand\thefootnote{}

\begin{abstract}
We investigate the monogenity of  irreducible factors of the Fibonacci polynomials $F_n(x)$ and the Lucas polynomials $L_n(x)$. 
Our main results show that for every odd positive integer $n$, all irreducible factors of $F_n(x)$ are monogenic, and for every even positive integer $n$, all irreducible factors of $L_n(x)$ are monogenic. 
\end{abstract}

{\footnotesize
	\tableofcontents}
\footnote{\textbf{Keywords.} \quad Fibonacci polynomials, Lucas polynomials, monogenic fields, cyclotomic fields, power integral bases.}
\footnote{\textbf{Mathematics Subject Classification (2020).}\quad 11R04, 11R18, 11R21, 11B39.}

\section{Introduction} 
Let $K$ be an algebraic number field of degree $n$ over $\mathbb{Q}$, and let $\mathcal{O}_K$ denote its 
ring of integers. We say that the field $K$ is monogenic, or that $\mathcal{O}_K$ possesses a power integral basis, if there exists an algebraic integer $\theta\in\mathcal{O}_K$ such that $\mathcal{O}_K=\mathbb{Z}[\theta]$. 
A non-zero irreducible polynomial $f(x)\in\mathbb{Z}[x]$ is said to be monogenic if there exists a root $\alpha$ of $f(x)$ such that $\mathbb{Z}[\alpha]$ is exactly the ring of integers of the field  $\mathbb{Q}(\alpha)$, i.e. $\mathbb{Q}(\alpha)$ is monogenic. Note that the  field $K$ may be monogenic without $f(x)$ being monogenic, which can be shown by considering the easy case that $K=\mathbb{Q}(\sqrt{5})$ and $f(x)=x^2-5$.

Research in monogenity can be  traced back to  Dedekind\cite{dedekind1878}, Hensel\cite{hensel1908}, Hasse\cite{hasse1963}, and has received considerable attention recently. For a comprehensive survey, we refer the reader to Ga\'{a}l's paper\cite{gaal2024}. While previous studies predominantly deal with statically defined polynomial families, there is little research on the monogenity of polynomial sequences. Very recently, Harrington and Jones \cite{harrington2025monogeniccyclicpolynomialsrecurrence} investigate the appearance of monogenic cyclic polynomials in certain linear recurrence sequences. Such dynamic polynomial families often exhibit rich arithmetic structures, making the characterization of their monogenicity a fascinating challenge.

In this note, we direct our attention to two of the most famous polynomials recurrence sequences, i.e. the Fibonacci and Lucas polynomials. The Fibonacci polynomial sequence $F_n(x)$ satisfies the recurrence relation 
\[
F_n(x) = xF_{n-1}(x) + F_{n-2}(x)
\]
with the initial terms $F_0(x) = 0$ and $F_1(x) = 1$, which can be viewed as a natural generalization of the Fibonacci numbers. Similarly, the Lucas polynomial sequence $L_n(x)$ satisfies the same recurrence relation but with the initial terms $L_0(x) = 2$ and $L_1(x) = x$. These two polynomial sequences are indexed in the On-Line Encyclopedia of Integer Sequences (OEIS) \cite{oeis} as A049310 and A114525, respectively.

Since the terms $F_n(x)$ and $L_n(x)$ are not generally irreducible over $\mathbb{Q}$, it is natural to investigate the arithmetic properties of their irreducible factors. We completely characterize the monogenity of the irreducible factors of $F_n(x)$ and $L_n(x)$ for specific parities of $n$. Precisely, we obtain the following main results.

 \begin{theorem}[]\label{monofibo}
For any odd positive integer $n$, all irreducible  factors of $F_{n}(x)$ are monogenic.
\end{theorem}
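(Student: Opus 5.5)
The plan is to identify each irreducible factor of $F_n(x)$ with the minimal polynomial of an element of the form $\zeta-\zeta^{-1}$, where $\zeta$ is a root of unity of order $4d$ with $d$ odd. We then deduce monogenity from the classical fact that the ring of integers of $\mathbb{Q}(\zeta)$ is $\mathbb{Z}[\zeta]$, using a relative quadratic argument.

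\textbf{Step 1: the roots.} Write $F_n(x)=(\alpha^n-\beta^n)/(\alpha-\beta)$ with $\alpha+\beta=x$ and $\alpha\beta=-1$. Then the $n-1$ roots of $F_n$ are $x_k=2i\cos(k\pi/n)$ for $1\le k\le n-1$, and they are pairwise distinct. Put $\omega=e^{ik\pi/n}$ and $\zeta=i\omega$. Then $(i\omega)^{-1}=-i\omega^{-1}$, so
\[
x_k=i\omega+i\omega^{-1}=\zeta-\zeta^{-1}.
\]
Since $n$ is odd, $\omega$ has order $m$ or $2m$ with $m\mid n$ odd, and $m>1$ because $k\not\equiv 0 \pmod n$. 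Hence $\zeta$ has order exactly $4d$ for the odd number $d=m>1$. Consequently every root $\theta$ of an irreducible factor $f$ of $F_n$ has the form $\theta=\zeta-\zeta^{-1}$ with $\zeta$ a primitive $4d$-th root of unity. Since $F_n$ is monic with integer coefficients, we may take $f$ monic, and $f$ is then the minimal polynomial of $\theta$. It therefore suffices to prove $\mathcal{O}_K=\mathbb{Z}[\theta]$ for $K=\mathbb{Q}(\theta)$.

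\textbf{Step 2: relative quadratic structure.} Put $L=\mathbb{Q}(\zeta)$. The element $\zeta$ satisfies $\zeta^2-\theta\zeta-1=0$ over $K$, whose other root is $-\zeta^{-1}=\theta-\zeta$. We check that $[L:K]=2$, i.e.\ that $\zeta\notin K$. The automorphism $\sigma:\zeta\mapsto-\zeta^{-1}$ exists, because $-\zeta^{-1}=\zeta^{2d-1}$ and $\gcd(2d-1,4d)=1$. It is nontrivial and fixes $\theta$, so $\zeta\notin K$. In particular $[K:\mathbb{Q}]=\varphi(4d)/2=\varphi(d)$.

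\textbf{Step 3: descending $\mathbb{Z}[\zeta]$.} We have $\theta\in\mathbb{Z}[\zeta]$, and the monic quadratic relation gives
\[
\mathbb{Z}[\zeta]=\mathbb{Z}[\theta][\zeta]=\mathbb{Z}[\theta]\oplus\mathbb{Z}[\theta]\zeta .
\]
The sum is direct because $1,\zeta$ are $K$-linearly independent by Step 2. Using $\mathcal{O}_L=\mathbb{Z}[\zeta]$, we get $\mathcal{O}_K=\mathcal{O}_L\cap K$. Write an element of $\mathcal{O}_K$ as $a+b\zeta$ with $a,b\in\mathbb{Z}[\theta]$. It is fixed by $\sigma$, so
\[
a+b\zeta=a+b(\theta-\zeta),
\]
which gives $b(2\zeta-\theta)=0$. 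Since $2\zeta-\theta=\zeta+\zeta^{-1}\neq0$ (as $\zeta^2\ne-1$ because $d>1$), we get $b=0$. Hence $\mathcal{O}_K=\mathbb{Z}[\theta]$, so $f$ is monogenic.

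The main obstacle is Step 1: the bookkeeping needed to show that every root lifts to a primitive $4d$-th root of unity with $d$ odd. This is exactly where the oddness of $n$ enters; for even $n$, the root $x=0$ and roots corresponding to other orders of $\zeta$ break the argument. I would verify this step first, for example on $F_3=x^2+1$ with $d=3$ and $\theta=i$. Everything after Step 1 is formal.
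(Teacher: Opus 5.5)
Your proof is correct, but it takes a genuinely different route from the paper.

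\textbf{The paper's route.} The paper keeps the roots in the form $i(\zeta_{2d}^k+\zeta_{2d}^{-k})$. It partitions them into the conjugate classes $R_d$ ($d\mid n$, $d>1$) to identify the factors $\Omega_d$, and then applies Proposition~\ref{comfieldmono}. That proof:
\begin{enumerate}
\item realizes $\mathbb{Q}(i(\zeta_d+\zeta_d^{-1}))$ as the compositum of $\mathbb{Q}(i)$ and $\mathbb{Q}(\zeta_d+\zeta_d^{-1})$;
\item invokes the coprime-discriminant theorem together with $\mathcal{O}_{\mathbb{Q}(\zeta_d+\zeta_d^{-1})}=\mathbb{Z}[\zeta_d+\zeta_d^{-1}]$;
\item checks by explicit recursions that $i$ and $\zeta_d+\zeta_d^{-1}$ already lie in $\mathbb{Z}[i(\zeta_d+\zeta_d^{-1})]$.
\end{enumerate}

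\textbf{Your route.} Your substitution $\zeta=i\omega$ (for instance $i(\zeta_{2d}+\zeta_{2d}^{-1})=\zeta_{4d}^{d+2}-\zeta_{4d}^{-(d+2)}$ with $\gcd(d+2,4d)=1$) places the Fibonacci case in the family $\mathbb{Q}(\zeta_N-\zeta_N^{-1})$ with $4\mid N$, $N>4$. This is the family of Proposition~\ref{comfieldmonolucas}, which the paper uses only for the Lucas polynomials. Your automorphism $\zeta\mapsto-\zeta^{-1}=\zeta^{N/2-1}$ is exactly the nontrivial element of $H$ in Lemma~\ref{n4bsdegreeofsubfield1}. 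Your Step~3, which uses that $\mathbb{Z}[\zeta]$ is free over $\mathbb{Z}[\theta]$ on $1,\zeta$, replaces the leading-coefficient argument in the proof of Proposition~\ref{comfieldmonolucas}.

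\textbf{What each approach buys.} Your route needs only $\mathcal{O}_{\mathbb{Q}(\zeta)}=\mathbb{Z}[\zeta]$. Proposition~\ref{comfieldmono} and its lemmas become unnecessary, and a single argument proves both main theorems. You also never need the exact factorization of $F_n$, since any root of any irreducible factor works. The paper's route gives in addition the explicit list of irreducible factors: the $\Omega_d$, of degree $\varphi(d)$, one for each $d\mid n$ with $d>1$.

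A small simplification: in Step~3 the $\sigma$-invariance is not needed. If $a+b\zeta\in K$ with $b\neq 0$, then $\zeta\in K$, contradicting Step~2.
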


\begin{theorem}[]\label{monolucas}
	For any even positive integer $n$, all irreducible factors of $L_n(x)$
	are monogenic.
\end{theorem}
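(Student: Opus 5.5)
The plan is to parametrize the roots of $L_n(x)$ by roots of unity and then show that each root $\theta$ generates the full ring of integers of $\mathbb{Q}(\theta)$. For this we use a relative version of the standard argument that $\mathbb{Z}[\zeta+\zeta^{-1}]$ is the ring of integers of the maximal real subfield of a cyclotomic field.

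\emph{Step 1: locating the roots.} The characteristic roots of $t^2-xt-1$ are $z$ and $-z^{-1}$, where $x=z-z^{-1}$. Binet's formula therefore gives
\[
L_n(z-z^{-1})=z^n+(-1)^nz^{-n}=z^{-n}\bigl(z^{2n}+1\bigr)
\]
for $n$ even. Since $L_n$ has degree $n$ and leading coefficient $1$, its roots are the elements $\zeta-\zeta^{-1}$ with $\zeta^{2n}=-1$. Each such value is attained exactly twice, by $\zeta$ and $-\zeta^{-1}$, and $2n$ choices divided by $2$ gives $n$ roots. Each such $\zeta$ is a primitive $m$-th root of unity for some $m\mid 4n$ with $v_2(m)=v_2(4n)\ge 3$. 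By Gauss's lemma every irreducible factor $f$ of $L_n$ may be taken monic in $\mathbb{Z}[x]$. Hence any root $\theta$ of $f$ has the form $\theta=\zeta-\zeta^{-1}$ with $\zeta$ a primitive $m$-th root of unity and $8\mid m$. It suffices to prove $\mathcal{O}_{\mathbb{Q}(\theta)}=\mathbb{Z}[\theta]$ for such $\theta$.

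\emph{Step 2: the relative basis.} Set $K=\mathbb{Q}(\theta)\subseteq \mathbb{Q}(\zeta)$. Since $\zeta^2-\theta\zeta-1=0$, we have $\mathbb{Z}[\zeta]=\mathbb{Z}[\theta]+\mathbb{Z}[\theta]\zeta$. Consider the automorphism $\sigma$ of $\mathbb{Q}(\zeta)$ with $\sigma(\zeta)=-\zeta^{-1}=\zeta^{m/2-1}$; it exists because $m/2-1$ is odd and hence prime to $m$. It fixes $\theta$, so it fixes $K$ pointwise. It is nontrivial, since $-\zeta^{-1}=\zeta$ would force $\zeta^2=-1$, i.e. $m=4$. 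Therefore $\zeta\notin K$, and $[\mathbb{Q}(\zeta):K]=2$ with $K$-basis $1,\zeta$.

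\emph{Step 3: the descent.} Let $\alpha\in\mathcal{O}_K$. Since $\mathcal{O}_{\mathbb{Q}(\zeta)}=\mathbb{Z}[\zeta]$, we can write $\alpha=a+b\zeta$ with $a,b\in\mathbb{Z}[\theta]$. Applying $\sigma$ and using $-\zeta^{-1}=\theta-\zeta$ gives
\[
\alpha=\sigma(\alpha)=a+b(\theta-\zeta)=(a+b\theta)-b\zeta .
\]
Comparing coefficients in the $K$-basis $1,\zeta$ yields $2b=0$, so $b=0$ and $\alpha=a\in\mathbb{Z}[\theta]$. Hence $\mathcal{O}_K=\mathbb{Z}[\theta]$, so $f$ is monogenic.

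The only delicate point is Step 1: correctly identifying all roots and checking that the relevant $\zeta$ never satisfies $\zeta^2=-1$, so that $\sigma$ is nontrivial. For even $n$ we have $8\mid m$, so this holds automatically.

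The same scheme should prove Theorem~\ref{monofibo}. For odd $n$,
\[
F_n(z-z^{-1})=\frac{z^n+z^{-n}}{z+z^{-1}}=\frac{z^{2n}+1}{z^{n-1}(z^2+1)}.
\]
So the roots are $\zeta-\zeta^{-1}$ with $\zeta^{2n}=-1$ and $\zeta^2\ne-1$. Now $v_2(m)=2$ and $m\ne 4$, so $m/2-1$ is odd and $\sigma$ is again a nontrivial automorphism fixing $\theta$. Steps 2 and 3 then go through verbatim.
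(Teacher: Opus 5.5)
Your proof is correct. It starts like the paper's: the roots of $L_n$ are written as $\zeta-\zeta^{-1}$ with $\zeta^{2n}=-1$, and you use the same automorphism $\zeta\mapsto\zeta^{m/2-1}=-\zeta^{-1}$ that the paper uses in Lemma~\ref{n4bsdegreeofsubfield1}. The two key steps, however, go differently.

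First, the paper sorts the roots into the sets $R_d$ and verifies $R_d=\mathcal{S}_d$, which identifies each irreducible factor as the minimal polynomial $\Omega_d$ of $\zeta_{2d}-\zeta_{2d}^{-1}$. You bypass this bookkeeping entirely, since all you need is that every root of every factor has the form $\zeta-\zeta^{-1}$ with $8\mid m$.

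Second, the descent differs. The paper's Proposition~\ref{comfieldmonolucas} argues by contradiction on the largest non-integral coefficient $a_N$ of $\alpha$ in the power basis of $\zeta_n-\zeta_n^{-1}$. It reads off $a_N$ as the coefficient of $\zeta_n^{2N}$ in $\zeta_n^N\alpha$ with respect to the integral basis $1,\zeta_n,\dots,\zeta_n^{\varphi(n)-1}$. This requires the exact degree $\varphi(n)/2$, so that all exponents stay below $\varphi(n)$. Your relative argument $\mathcal{O}_K=\mathbb{Z}[\zeta]\cap K=(\mathbb{Z}[\theta]\oplus\mathbb{Z}[\theta]\zeta)\cap K=\mathbb{Z}[\theta]$ needs only $\zeta\notin K$. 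In fact, once $a,b\in K$, the relation $\alpha-a=b\zeta$ forces $b=0$ directly, so $\sigma$ is used only to show $\zeta\notin K$.

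The paper's route yields more information, namely the explicit factorization $L_n=\prod_d\Omega_d$ and the degrees $\varphi(2d)/2$ of the factors. Yours is shorter and uniform. As you observe, it proves Theorem~\ref{monofibo} with no extra work: for odd $d>1$ one has $i(\zeta_{2d}+\zeta_{2d}^{-1})=\xi-\xi^{-1}$ with $\xi=i\zeta_{2d}$ a primitive $4d$-th root of unity. So the paper's separate Proposition~\ref{comfieldmono}, proved via the compositum of $\mathbb{Q}(i)$ and $\mathbb{Q}(\zeta_d+\zeta_d^{-1})$ with coprime discriminants, is really a special case of the $\zeta-\zeta^{-1}$ statement.
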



\begin{remark}
The parity conditions in Theorems \ref{monofibo} and \ref{monolucas} cannot be simply dropped to generalize the results to all positive integers $n$.  
	 For example, the odd-indexed Lucas polynomial $L_5(x)$ contains the irreducible factor $x^4+5x^2+5$, which is not monogenic. By the well-known identity $F_{2k}(x) = F_k(x)L_k(x)$, the same non-monogenic factor also appears in the even-indexed Fibonacci polynomial $F_{10}(x)$. Thus, the monogenity property does not universally hold for $F_n(x)$ when $n$ is even, nor for $L_n(x)$ when $n$ is odd. This justifies the parity restrictions in our main theorems and motivates the open problems presented at the end of this note.
\end{remark}

\paragraph{Plan of the article}
In 	Section 2, we first fix some notation and give some useful properties of polynomial recurrences. 

Our original proofs are motivated by some ingredients in \cite{harrington2025monogeniccyclicpolynomialsrecurrence}, where the authors use heavily the monogenity of cyclotomic subfield $\mathbb{Q}(\zeta+\zeta^{-1})$. In Section 3 and 4, we discuss the monogenity of fields $\mathbb{Q}(\zeta_{n}-\zeta_{n}^{-1})$ and $\mathbb{Q}(i(\zeta_{2n}+\zeta_{2n}^{-1}))$.
Theorem \ref{monofibo} and Theorem \ref{monolucas} are proved in Section 5 and 6, respectively. Finally, in Section 7, we list several open problems.

\section{Notation and Preliminaries}

Assume $n>1$ is a positive integer, 
let $\zeta_n=e^{2\pi i/n} \in \C$ be a primitive $n$-th root of unity. Note that when $n$ is odd,  $-(\zeta_n)^{n+1}=-\zeta_{n}$ is a primitive $2n$-th root of unity. We denote by $d_K$  the discriminant of an algebraic number field $K$. 

In the following, we give explicit formulas and the exact roots of Fibonacci polynomials and Lucas polynomials, which allows us to investigate irreducible factors of their terms. 
Although we use more refined discussions, item \ref{zerofnpoly} of Lemma \ref{propertiesfnpoly} and item \ref{zerolucaspoly} of Lemma \ref{propertieslucaspoly} can essentially 
be found in \cite{1973zero}. 
The rest items are quite well-known and can be found in \cite{koshy2019}. For the reader's convenience, we include full proofs in this section.

	\begin{lemma}[Properties of Fibonacci polynomials]\label{propertiesfnpoly}
	Let $F_n(x)$ be the Fibonacci polynomial. Then
	\begin{enumerate}
		\item\label{binetoffnpoly} The Fibonacci polynomials have the explicit formula
		\[
		F_n(x) = \frac{(x+\sqrt{x^2+4})^n - (x-\sqrt{x^2+4})^n}{2^n\sqrt{x^2+4}}, \quad n=0,1,2,\cdots.
		\]
		
		\item\label{zerofnpoly} 
		The set of zeros of $F_n(x)$ is
		\[
		R = \left\{ 2i\cos \left(k\pi/n\right) : k=1,2,\cdots,n-1 \right\}.
		\]
	\end{enumerate}
\end{lemma}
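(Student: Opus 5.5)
For item \ref{binetoffnpoly}, I would use the standard method for second-order linear recurrences. The characteristic equation of $F_n(x)=xF_{n-1}(x)+F_{n-2}(x)$ is $t^2-xt-1=0$, with roots
\[
\alpha=\frac{x+\sqrt{x^2+4}}{2},\qquad \beta=\frac{x-\sqrt{x^2+4}}{2}.
\]
These satisfy $\alpha+\beta=x$, $\alpha\beta=-1$ and $\alpha-\beta=\sqrt{x^2+4}$. Define $G_n=(\alpha^n-\beta^n)/(\alpha-\beta)$. Since $\alpha^2=x\alpha+1$, and likewise for $\beta$, multiplying by $\alpha^{n-2}$ (resp. $\beta^{n-2}$) shows that $G_n$ satisfies the same recurrence as $F_n$. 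Its initial values are $G_0=0$ and $G_1=1$. By induction on $n$, $G_n=F_n(x)$, which is exactly the displayed formula after clearing the factor $2^n$. To avoid worrying about branches of the square root, I would do the computation in the quadratic extension $\Q(x)(\sqrt{x^2+4})$ of the rational function field. Alternatively, note that the expression is symmetric under $\sqrt{x^2+4}\mapsto-\sqrt{x^2+4}$, so it is a polynomial in $x$. For $x=\pm 2i$ one interprets the formula as a limit, since the expression is polynomial.

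For item \ref{zerofnpoly}, I would first check that $\deg F_n=n-1$ with leading coefficient $1$. This follows by induction from the recurrence, since $xF_{n-1}$ contributes the top term. It therefore suffices to show that the $n-1$ numbers $2i\cos(k\pi/n)$, $1\le k\le n-1$, are pairwise distinct roots of $F_n$. Distinctness is clear because $\cos$ is injective on $(0,\pi)$.

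To verify that they are roots, substitute $x=2i\cos\theta$ with $\theta=k\pi/n$. Then $x^2+4=4-4\cos^2\theta=4\sin^2\theta$, so we may take $\sqrt{x^2+4}=2\sin\theta$. Hence $\alpha=i\cos\theta+\sin\theta=ie^{-i\theta}$ and $\beta=i\cos\theta-\sin\theta=ie^{i\theta}$. The formula from item \ref{binetoffnpoly} then gives
\[
F_n(2i\cos\theta)=\frac{i^n\left(e^{-in\theta}-e^{in\theta}\right)}{2\sin\theta}=\frac{-i^{n+1}\sin(n\theta)}{\sin\theta}.
\]
For $\theta=k\pi/n$ with $1\le k\le n-1$, the denominator is nonzero and the numerator vanishes. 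So each listed value is a root.

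The only delicate point is justifying the use of the Binet formula at these specific values, where we must have $\alpha\neq\beta$, i.e. $\sin\theta\neq0$. That holds here. To sidestep any branch issue, I would avoid the square-root formula entirely and prove directly by induction that $F_n(2i\cos\theta)\sin\theta=-i^{n+1}\sin(n\theta)$, using only the recurrence and the identity $\sin((n+1)\theta)+\sin((n-1)\theta)=2\cos\theta\sin(n\theta)$. With $n-1$ distinct roots found for a polynomial of degree $n-1$, the set of zeros is exactly $R$.
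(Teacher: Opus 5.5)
Your proof is correct. Part (1) matches the paper: the paper posits the general solution $c_1\alpha^n+c_2\beta^n$ and fits the initial values, while you check directly that $(\alpha^n-\beta^n)/(\alpha-\beta)$ obeys the recurrence. That is the same idea made self-contained.

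Part (2) runs in the opposite direction from the paper. The paper starts from $F_n(x)=0$ and rewrites it as $\alpha^n=\beta^n$, hence $\alpha^{2n}=(-1)^n$. It then reads off $\alpha=\pm i\zeta_{2n}^k$ and $x=\alpha-\alpha^{-1}=\pm 2i\cos(k\pi/n)$. So it \emph{derives} the candidate zeros from roots of unity before invoking the degree count. You instead take the candidates as given and evaluate $F_n(2i\cos\theta)=-i^{n+1}\sin(n\theta)/\sin\theta$, which is $i^{n-1}U_{n-1}(\cos\theta)$ in Chebyshev terms. You then need only the direction ``these are zeros'' together with $\deg F_n=n-1$.

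The paper's route explains where the zeros come from; yours is logically tighter. The paper's asserted equivalence $F_n(x)=0\iff\alpha^n=\beta^n$ holds only when $\alpha\neq\beta$. At $x=\pm 2i$ one has $\alpha=\beta=\pm i$, which satisfies $\alpha^{2n}=(-1)^n$, yet $F_n(\pm 2i)=n(\pm i)^{n-1}\neq 0$. So the paper's solution list contains the spurious values $k=0$ and $k=n$, which it discards without comment. Your argument never meets this degenerate point, because $\sin\theta\neq 0$ for $1\le k\le n-1$. Your inductive identity $F_n(2i\cos\theta)\sin\theta=-i^{n+1}\sin(n\theta)$ also removes the square-root branch issue entirely.
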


\begin{proof}
	(1) The Fibonacci polynomials satisfy the recurrence relation $F_n(x) = xF_{n-1}(x) + F_{n-2}(x)$ with the initial terms $F_0(x) = 0$ and $F_1(x) = 1$. 
	The characteristic equation corresponding to this linear recurrence sequence is given by
	\[
	\lambda^2 - x\lambda - 1 = 0.
	\]
	Solving this quadratic equation yields two distinct characteristic roots 
	\[
	\alpha = \frac{x + \sqrt{x^2+4}}{2}, \quad \beta = \frac{x - \sqrt{x^2+4}}{2}.
	\]
	The general solution for $F_n(x)$ can be expressed as a linear combination of these characteristic roots, say 
	\[
	F_n(x) = c_1\alpha^n + c_2\beta^n,
	\]
	where $c_1$ and $c_2$ are constants determined by the initial conditions. 
	Using the initial values $F_0(x) = 0$ and $F_1(x) = 1$, we obtain 
	\[
	\begin{cases}
		c_1 + c_2 = 0, \\
		c_1\alpha + c_2\beta = 1.
	\end{cases}
	\]
	Since $\alpha - \beta = \sqrt{x^2+4}$, we find
	 $
	c_1 = \frac{1}{\sqrt{x^2+4}}, \quad c_2 = -\frac{1}{\sqrt{x^2+4}}.
	$  
	This yields the Binet-like explicit formula
	\[
	F_n(x) = \frac{\alpha^n - \beta^n}{\alpha - \beta} = \frac{(x+\sqrt{x^2+4})^n - (x-\sqrt{x^2+4})^n}{2^n\sqrt{x^2+4}}.
	\]
	
	(2) To determine the zeros of $F_n(x)$, we set $F_n(x) = 0$. From the explicit formula derived in part (1), this condition is equivalent to
	$
\alpha^n = \beta^n.
	$ 
	By Vieta's formulas for the characteristic equation, we have $\alpha\beta = -1$, which implies $\beta = -\alpha^{-1}$. Substituting this into the equation yields
	\[
 \alpha^{2n} = (-1)^n.
	\]
	This implies $(-\alpha^2)^n = 1$, meaning $-\alpha^2$ is an $n$-th root of unity. Hence, we can write
	\[
	-\alpha^2 = \zeta_{n}^k
	\]
	for some $k$. 
	Taking the square root, we obtain 
	$
	\alpha = \pm i \zeta_{2n}^k.
	$
	Using the relationship $x = \alpha + \beta = \alpha - \alpha^{-1}$, we have
	\[
\begin{aligned}
x &= \pm i \zeta_{2n}^k - (\pm i \zeta_{2n}^k)^{-1} =  \pm 2i \cos\left(k\pi/n\right).
\end{aligned}
\]
	Since $\cos(\pi - \theta) = -\cos(\theta)$, the roots obtained from the negative sign are identical to the set of roots obtained from the positive sign under a reindexing of $k$. 
	Because the cosine function $\cos(k\pi/n)$ is strictly decreasing and distinct on the interval $(0, \pi)$ for $k=1,2,\cdots,n-1$, we obtain $n-1$ distinct roots:
	\[
	x_k = 2i\cos\left(k\pi/n\right), \quad k=1,2,\cdots,n-1.
	\]
	From the recurrence relation, it is evident that $F_n(x)$ is a polynomial of degree $n-1$ for $n \ge 1$. By the Fundamental Theorem of Algebra, the set $R$ constitutes the complete set of zeros of $F_n(x)$.
\end{proof}

\begin{lemma}\label{propertieslucaspoly}
	Let $F_n(x)$ and $L_n(x)$ denote the Fibonacci and Lucas polynomials, respectively. Then
	\begin{enumerate}
		\item\label{Lucaswithfngl} $L_n(x) = F_{n-1}(x) + F_{n+1}(x)$ for $n \ge 1$.
		
		\item\label{binetolucaspoly} The Lucas polynomials satisfy the explicit formula
		\[
		L_n(x) = 2^{-n} \left[ (x+\sqrt{x^2+4})^n + (x-\sqrt{x^2+4})^n \right].
		\]
		
		\item \label{zerolucaspoly} The set $R$ of zeros of $L_n(x)$ is given by
		\[
		R = \begin{cases}
			\left\{ 2i\sin\left(\frac{k\pi}{n}\right) : k = -\frac{n-1}{2}, \dots, \frac{n-1}{2} \right\}, & \text{if } n \text{ is odd,} \\[1ex]
			\left\{ 2i\sin\left(\frac{(2k+1)\pi}{2n}\right) : k = -\frac{n}{2}, \dots, \frac{n}{2}-1 \right\}, & \text{if } n \text{ is even.}
		\end{cases}
		\]
	\end{enumerate}
\end{lemma}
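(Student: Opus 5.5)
The plan is to prove the three items in order, reusing the Binet machinery from the proof of Lemma \ref{propertiesfnpoly}. Throughout, let $\alpha,\beta=\frac{x\pm\sqrt{x^2+4}}{2}$ be the roots of $\lambda^2-x\lambda-1=0$, so that $\alpha+\beta=x$, $\alpha\beta=-1$ and $\alpha-\beta=\sqrt{x^2+4}$.

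For item \ref{Lucaswithfngl}, I will argue by strong induction on $n$. Set $G_n=F_{n-1}+F_{n+1}$ for $n\ge1$. Since both $F_{n-1}$ and $F_{n+1}$ satisfy the recurrence, so does $G_n$. It therefore suffices to check two initial values. First, $G_1=F_0+F_2=0+x=L_1$. Second, $G_2=F_1+F_3=1+(x^2+1)=x^2+2$, and $L_2=xL_1+L_0=x^2+2$. Because the two sequences agree at $n=1,2$ and obey the same recurrence, $L_n=G_n$ for all $n\ge1$.

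For item \ref{binetolucaspoly}, I will show that $L_n=\alpha^n+\beta^n$. This can be done in either of two ways.
\begin{itemize}
\item \emph{Direct check.} The sequence $\alpha^n+\beta^n$ satisfies the recurrence, and its initial terms are $2$ and $\alpha+\beta=x$, matching $L_0$ and $L_1$.
\item \emph{Via item \ref{Lucaswithfngl}.} Using Lemma \ref{propertiesfnpoly}, compute
\[
F_{n-1}+F_{n+1}=\frac{\alpha^{n-1}(1+\alpha^2)-\beta^{n-1}(1+\beta^2)}{\alpha-\beta}.
\]
Since $1+\alpha^2=\alpha(\alpha-\beta)$ (because $\beta=-\alpha^{-1}$), and similarly $1+\beta^2=-\beta(\alpha-\beta)$, this expression collapses to $\alpha^n+\beta^n$.
\end{itemize}
Writing $\alpha=(x+\sqrt{x^2+4})/2$ and $\beta=(x-\sqrt{x^2+4})/2$ then gives the stated formula.

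For item \ref{zerolucaspoly}, the argument mirrors that for Fibonacci zeros. $L_n(x)=0$ means $\alpha^n=-\beta^n=-(-1)^n\alpha^{-n}$, that is, $\alpha^{2n}=(-1)^{n+1}$.
\begin{itemize}
\item \emph{$n$ odd.} Here $\alpha^{2n}=1$, so $\alpha=\zeta_{2n}^m$. Then $x=\alpha-\alpha^{-1}=2i\sin(m\pi/n)$.
\item \emph{$n$ even.} Here $\alpha^{2n}=-1$, so $\alpha=e^{i(2m+1)\pi/(2n)}$. Then $x=2i\sin\bigl((2m+1)\pi/(2n)\bigr)$.
\end{itemize}
Conversely, each such $x$ is a root. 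Indeed, given $x=\alpha-\alpha^{-1}$ with $\alpha$ of the stated form, $\alpha$ and $-\alpha^{-1}$ are the two characteristic roots for that $x$. The relation $\alpha^{2n}=(-1)^{n+1}$ then gives $L_n(x)=\alpha^n+(-\alpha^{-1})^n=0$.

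It remains to count roots. In both parities $L_n$ is monic of degree $n$, by induction from the recurrence. The listed arguments, $k\pi/n$ with $|k|\le(n-1)/2$ for $n$ odd and $(2k+1)\pi/(2n)$ with $-n/2\le k\le n/2-1$ for $n$ even, all lie in $(-\pi/2,\pi/2)$, where sine is strictly increasing. Hence the $n$ listed values are distinct. They are therefore exactly the $n$ zeros of $L_n$.

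The main obstacle is this final bookkeeping step: checking that the chosen index ranges land in $(-\pi/2,\pi/2)$, so that they give exactly $n$ distinct values. For $n$ odd, the endpoints are $\pm(n-1)\pi/(2n)$. For $n$ even, they are $(-n+1)\pi/(2n)$ and $(n-1)\pi/(2n)$. Both have absolute value less than $\pi/2$, so the ranges work.
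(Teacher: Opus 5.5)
Your proposal is correct and follows essentially the same route as the paper: induction for item (1), the collapse to $\alpha^{n}+\beta^{n}$ via $1+\alpha^2=\alpha(\alpha-\beta)$ for item (2), and solving $\alpha^{2n}=(-1)^{n+1}$ for item (3). Your version is slightly tighter in two places. Checking both $n=1$ and $n=2$ avoids the paper's implicit use of $L_0=F_{-1}+F_1$ in its inductive step. Your explicit converse, together with the monotonicity of sine on $(-\pi/2,\pi/2)$ and the degree count, makes rigorous the paper's informal ``restrict the index by symmetry'' step.
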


\begin{proof}
	(1) We proceed by strong induction on $n$. 
	For the base case $n=1$, we have $F_0(x) + F_2(x) = 0 + x = x = L_1(x)$. Assume the statement holds for all integers up to $k \ge 1$. For $n = k+1$, we have
	\[
	\begin{aligned}
		L_{k+1}(x) &= xL_k(x) + L_{k-1}(x) \\
		&= x(F_{k-1}(x) + F_{k+1}(x)) + (F_{k-2}(x) + F_{k}(x)) \\
		&= (xF_{k-1}(x) + F_{k-2}(x)) + (xF_{k+1}(x) + F_k(x)) \\
		&= F_k(x) + F_{k+2}(x).
	\end{aligned}
	\]
	By induction, the formula holds for all $n \ge 1$.
	
	(2) Let $\alpha = \frac{x+\sqrt{x^2+4}}{2}$ and $\beta = \frac{x-\sqrt{x^2+4}}{2}$. Notice that $\alpha - \beta = \sqrt{x^2+4}$,  $\alpha\beta = -1$ and 
	 $F_n(x) = \frac{\alpha^n - \beta^n}{\alpha - \beta}$. By part (1), we obtain
	\[
	\begin{aligned}
		L_n(x) &= F_{n-1}(x) + F_{n+1}(x) \\
		&= \frac{\alpha^{n-1} - \beta^{n-1}}{\alpha - \beta} + \frac{\alpha^{n+1} - \beta^{n+1}}{\alpha - \beta} \\
		&= \frac{\alpha^{n-1}(1 + \alpha^2) - \beta^{n-1}(1 + \beta^2)}{\alpha - \beta}.
	\end{aligned}
	\]
	Using the relation $\alpha\beta = -1$, we can deduce $1 + \alpha^2 = -\alpha\beta + \alpha^2 = \alpha(\alpha - \beta)$. Similarly, $1 + \beta^2 = -\alpha\beta + \beta^2 = -\beta(\alpha - \beta)$. Substituting these identities into the equation yields
	\[
	\begin{aligned}
		L_n(x) &= \frac{\alpha^{n-1} \cdot \alpha(\alpha - \beta) - \beta^{n-1} \cdot (-\beta(\alpha - \beta))}{\alpha - \beta} \\
		&= \frac{(\alpha^n + \beta^n)(\alpha - \beta)}{\alpha - \beta} \\
		&= \alpha^n + \beta^n 
		,
	\end{aligned}
	\]
	which is exactly the desired formula.
	
	(3) From  part (2), we already established that $L_n(x) = \alpha^n + \beta^n$, with $\alpha\beta = -1$ and $x = \alpha + \beta$. Setting $L_n(x) = 0$ yields
	\[
	\alpha^n + \beta^n = 0 \implies \alpha^{2n} = (-1)^{n+1}.
	\]
	
	If $n$ is even, then $\alpha^{2n} = -1$. This equation has $2n$ distinct roots of the form $\alpha = \zeta_{4n}^{2k+1}$ for $k = 0, 1, \dots, 2n-1$. 
	Since $\beta = -1/\alpha$, we have $x = \alpha + \beta = \alpha - \alpha^{-1}$. Substituting the roots gives
	\[
	x = \zeta_{4n}^{2k+1} - \zeta_{4n}^{-(2k+1)} = 2i\sin\left(\frac{(2k+1)\pi}{2n}\right).
	\]
	Due to the symmetry $\sin(\pi - \theta) = \sin(\theta)$, the mapping from $\alpha$ to $x$ is 2-to-1. To obtain exactly $n$ distinct zeros of $L_n(x)$, we restrict the index $k$ to a subset that yields unique sine values, namely $k = -\frac{n}{2}, \dots, \frac{n}{2}-1$. 
	
	If $n$ is odd, then $\alpha^{2n} = 1$. This equation has $2n$ distinct roots of the form $\alpha = \zeta_{2n}^k$ for $k = 0, 1, \dots, 2n-1$. 
	Similarly, $x = \alpha - \alpha^{-1} = 2i\sin\left(\frac{k\pi}{n}\right)$. 
	By the same symmetry property of the sine function, we restrict $k$ to a symmetric range around zero to obtain the $n$ distinct roots, namely $k = -\frac{n-1}{2}, \dots, \frac{n-1}{2}$.
\end{proof}

\section{Monogenity of $\mathbb{Q}(\zeta_{n}-\zeta_{n}^{-1})$}
In this section, we investigate the monogenity of the cyclotomic subfield $\Q(\zeta_{n}-\zeta_{n}^{-1})$, where $n>1$ is a positive integer.
We begin by determining the  extension degree of the cyclotomic field $\Q(\zeta_{n})$ over this subfield through a Galois-theoretic approach.

\begin{lemma}\label{n4bsdegreeofsubfield1}
	Let $n>1$ be a positive integer. If $4 \mid n$ and $n > 4$, then $[\mathbb{Q}(\zeta_{n}) : \mathbb{Q}(\zeta_{n}-\zeta_{n}^{-1})] = 2$. If $4 \nmid n$ or $n = 4$, then $\mathbb{Q}(\zeta_{n}) = \mathbb{Q}(\zeta_{n}-\zeta_{n}^{-1})$.
\end{lemma}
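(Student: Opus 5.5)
Set $\theta=\zeta_n-\zeta_n^{-1}$ and $G=\mathrm{Gal}(\mathbb{Q}(\zeta_n)/\mathbb{Q})\cong(\mathbb{Z}/n\mathbb{Z})^\times$, with $\sigma_a(\zeta_n)=\zeta_n^a$. By the Galois correspondence, $[\mathbb{Q}(\zeta_n):\mathbb{Q}(\theta)]$ is the order of the stabilizer $H=\{a\in(\mathbb{Z}/n\mathbb{Z})^\times : \sigma_a(\theta)=\theta\}$. So the whole proof comes down to computing $H$.

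The condition $\sigma_a(\theta)=\theta$ reads $\zeta_n^a-\zeta_n^{-a}=\zeta_n-\zeta_n^{-1}$. Equivalently, $\zeta_n^a+\zeta_n^{-1}=\zeta_n+\zeta_n^{-a}$. We then use the standard fact that two sums $u_1+u_2=v_1+v_2$ of roots of unity are equal iff either $\{u_1,u_2\}=\{v_1,v_2\}$ or $u_1=-u_2$ and $v_1=-v_2$. This follows by comparing unit vectors: a sum of two unit complex numbers determines the pair unless the sum is $0$.

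\textbf{Case 1:} $\{\zeta_n^a,\zeta_n^{-1}\}=\{\zeta_n,\zeta_n^{-a}\}$. Then either $a\equiv1$, or $\zeta_n^a=\zeta_n^{-a}$ together with $\zeta_n^{-1}=\zeta_n$. The second option forces $n\mid 2$, which is excluded once $n>2$; for $n=2$ everything is trivial since $\mathbb{Q}(\zeta_2)=\mathbb{Q}$.

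\textbf{Case 2:} $\zeta_n^a=-\zeta_n^{-1}$ and $\zeta_n=-\zeta_n^{-a}$. Both give $\zeta_n^{a+1}=-1$, which is solvable iff $n$ is even, with $a\equiv n/2-1 \pmod n$.

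So $H\subseteq\{1,\,n/2-1\}$, and it remains to check when $a=n/2-1$ is a unit different from $1$. If $n$ is odd, the element does not exist. If $n\equiv2\pmod4$, then $n/2$ is odd, so $n/2-1$ is even while $n$ is even; hence $\gcd(n/2-1,n)\ge2$ and $a$ is not a unit. If $4\mid n$, then $\gcd(n/2-1,n)$ divides $\gcd(n/2-1,n/2)\cdot 2$ and is odd, hence equals $1$, so $a$ is a unit. Finally, $n/2-1\equiv1 \pmod n$ iff $n=4$.

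Putting this together, $|H|=2$ exactly when $4\mid n$ and $n>4$, and $H$ is trivial otherwise, which is the claim. The only delicate step is the lemma on equal sums of two roots of unity. I would prove it directly: $u_1+u_2=v_1+v_2=s$. If $s\neq0$, both pairs are the two intersection points of the unit circle with the circle $|z-s|=1$, so they coincide. If $s=0$, we are in the antipodal case.
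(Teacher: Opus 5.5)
Your proof is correct and follows the paper's argument: you compute the stabilizer $H$ of $\zeta_n-\zeta_n^{-1}$ in $(\mathbb{Z}/n\mathbb{Z})^\times$, reduce to the two cases $a\equiv 1$ or $\zeta_n^{a+1}=-1$, and then check when $n/2-1$ is a unit different from $1$. The only difference is how you obtain the two cases: the paper uses the factorization $\zeta_n^{a+1}(\zeta_n^a-\zeta_n^{-a}-\zeta_n+\zeta_n^{-1})=(\zeta_n^a-\zeta_n)(\zeta_n^{a+1}+1)$, which makes your auxiliary lemma on equal sums of two unit complex numbers unnecessary.
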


\begin{proof}
	Since $\zeta_{n}$ satisfies $x^2 - (\zeta_{n}-\zeta_{n}^{-1})x - 1 = 0$, it follows that $[\mathbb{Q}(\zeta_{n}) : \mathbb{Q}(\zeta_{n}-\zeta_{n}^{-1})] \le 2$.
	
	The extension $\mathbb{Q}(\zeta_{n}) / \mathbb{Q}$ is Galois, and $\mathrm{Gal}(\mathbb{Q}(\zeta_{n})/\mathbb{Q}) \cong (\mathbb{Z}/n\mathbb{Z})^{*}$. For any $a \in (\mathbb{Z}/n\mathbb{Z})^{*}$, there exists $\sigma_{a} \in \mathrm{Gal}(\mathbb{Q}(\zeta_{n})/\mathbb{Q})$ such that $\sigma_{a}(\zeta_{n}) = \zeta_{n}^a$. Let 
	\[
	H = \{ \sigma_{a} \in \mathrm{Gal}(\mathbb{Q}(\zeta_{n})/\mathbb{Q}) \mid \sigma_{a}(\zeta_{n}-\zeta_{n}^{-1}) = \zeta_{n}-\zeta_{n}^{-1} \}
	\]
	be a subgroup of $\mathrm{Gal}(\mathbb{Q}(\zeta_{n})/\mathbb{Q})$. The fixed field of $H$ is exactly $\mathbb{Q}(\zeta_{n}-\zeta_{n}^{-1})$, which implies $|H| = [\mathbb{Q}(\zeta_{n}) : \mathbb{Q}(\zeta_{n}-\zeta_{n}^{-1})]$.
	
	For $\sigma_{a} \in \mathrm{Gal}(\mathbb{Q}(\zeta_{n})/\mathbb{Q})$, we have $\sigma_{a}(\zeta_{n}-\zeta_{n}^{-1}) = \zeta_{n}^a - \zeta_{n}^{-a}$.
	If $\sigma_{a} \in H$, then $\zeta_{n}^a - \zeta_{n}^{-a} = \zeta_{n} - \zeta_{n}^{-1}$. Multiplying both sides by $\zeta_{n}^{a+1}$ yields
	\[
	\begin{aligned}
		\zeta_{n}^{2a+1} - \zeta_{n} - \zeta_{n}^{a+2} + \zeta_{n}^{a} &= 0 \\
		(\zeta_{n}^a - \zeta_{n})(\zeta_{n}^{a+1} + 1) &= 0 \\
	\end{aligned}
	\]
	When $a \equiv 1 \pmod{n}$, we get the identity element $\sigma_{1} = 1$.

	If $n$ is odd, then the equation $\zeta_n^{a+1}+1=0$ has no solution. When $n$ is even, if $a$ satisfies $(\zeta_n^a-\zeta_n)(\zeta_n^{a+1}+1)=0$, then $a \equiv 1 \pmod{n}$ or $a \equiv \frac{n}{2}-1 \pmod{n}$.
	    
	If $n \equiv 2 \pmod{4}$, let $n = 2m$ where $m$ is odd. 
	Since $2 \mid (m-1, 2m)$, any $a$ satisfying $a \equiv \frac{n}{2}-1 \pmod{n}$ is not coprime to $n$. Thus $a \notin (\mathbb{Z}/n\mathbb{Z})^{*}$, meaning $\sigma_{a} \notin \mathrm{Gal}(\mathbb{Q}(\zeta_{n})/\mathbb{Q})$.
	If $n = 4$, then $\frac{n}{2}-1 = 1$, which is the same as $a \equiv 1 \pmod{n}$.
	Therefore, if $4 \nmid n$ or $n = 4$, $H = \{1\}$, and $|H| = [\mathbb{Q}(\zeta_{n}) : \mathbb{Q}(\zeta_{n}-\zeta_{n}^{-1})] = 1$, yielding $\mathbb{Q}(\zeta_{n}) = \mathbb{Q}(\zeta_{n}-\zeta_{n}^{-1})$.
	
	If $4 \mid n$ and $n > 4$, let $n = 4k$. Then $\frac{n}{2}-1 = 2k-1$ is odd. Since $2k-1 > 1$ and $(2k-1, n) = 1$, the solution $a \equiv \frac{n}{2}-1 \pmod{n}$ is distinct from $a \equiv 1 \pmod{n}$, and such $a$ belongs to $(\mathbb{Z}/n\mathbb{Z})^{*}$. Thus $\sigma_{a} \ne 1$ and $\sigma_{a} \in H$. Since $|H| = [\mathbb{Q}(\zeta_{n}) : \mathbb{Q}(\zeta_{n}-\zeta_{n}^{-1})] \le 2$, it must be that $H = \{1, \sigma_{a}\}$, which concludes $[\mathbb{Q}(\zeta_{n}) : \mathbb{Q}(\zeta_{n}-\zeta_{n}^{-1})] = 2$.
\end{proof}

\begin{proposition}\label{comfieldmonolucas}
	Let $n$ be an integer. If $4 \mid n$ and $n > 4$, then $\mathbb{Z}[\zeta_{n}-\zeta_{n}^{-1}]$ is the ring of integers of $\mathbb{Q}(\zeta_{n}-\zeta_{n}^{-1})$. In particular, $\mathbb{Q}(\zeta_{n}-\zeta_{n}^{-1})$ is monogenic.
\end{proposition}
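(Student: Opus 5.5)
Write $n=4m$ with $m>1$, and put $\zeta=\zeta_n$, $\theta=\zeta-\zeta^{-1}$. The key observation is that $\theta$ is a twist of the real element $\zeta+\zeta^{-1}$ by $i=\zeta^{m}$. Since $\zeta_{2n}^{\,n}=-1$ and $n$ is even, we have $-\zeta=\zeta^{1+n/2}=\zeta^{2m+1}$, which is again a primitive $n$-th root of unity because $2m+1$ is odd. More usefully, choose $\eta=\zeta\cdot i^{-1}$ suitably. Concretely, consider
\[
\theta=\zeta-\zeta^{-1}=i\,( -i\zeta + (-i\zeta)^{-1}) ,
\]
since $i\cdot(-i\zeta)^{-1}=i\cdot i\zeta^{-1}=-\zeta^{-1}$. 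Set $\xi=-i\zeta=\zeta^{1-m}\cdot(-1)$. Then $\theta = i(\xi+\xi^{-1})$. I will avoid relying on this representation alone, however, and instead use the classical approach: compare the discriminant of $\mathbb{Z}[\theta]$ with $d_K$ for $K=\mathbb{Q}(\theta)$.

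\textbf{Step 1: the field.} By Lemma \ref{n4bsdegreeofsubfield1}, $K$ is the fixed field of $\sigma_{2m-1}$ and has degree $\varphi(n)/2$. The ring $\mathbb{Z}[\zeta]=\mathcal{O}_{\mathbb{Q}(\zeta)}$ is free over $\mathbb{Z}[\theta]$ with basis $1,\zeta$, since $\zeta^2=\theta\zeta+1$. The relative discriminant of this basis is $\theta^2+4=(\zeta+\zeta^{-1})^2$.

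\textbf{Step 2: the tower formula.} Using transitivity of discriminants,
\[
d(\mathbb{Z}[\zeta]) = d(\mathbb{Z}[\theta])^{2}\, N_{K/\mathbb{Q}}\big((\zeta+\zeta^{-1})^2\big).
\]
Similarly, $d_{\mathbb{Q}(\zeta)}=d_K^2\,N_{K/\mathbb{Q}}(\mathfrak{d}_{\mathbb{Q}(\zeta)/K})$. Since $d(\mathbb{Z}[\zeta])=d_{\mathbb{Q}(\zeta)}$, we get
\[
[\mathcal{O}_K:\mathbb{Z}[\theta]]^2 = N(\mathfrak{d})^{-1}\,N\big((\zeta+\zeta^{-1})^2\big)^{-1}\cdot(\text{unit}),
\]
which forces the index to be $1$, provided $(\zeta+\zeta^{-1})^2\mathcal{O}_K$ divides $\mathfrak{d}$. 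The cleaner route is local: we need $\mathcal{O}_{\mathbb{Q}(\zeta)}=\mathcal{O}_K[\zeta]$. Granting that, $\{1,\zeta\}$ is an $\mathcal{O}_K$-basis of $\mathbb{Z}[\zeta]$. Then $\mathbb{Z}[\zeta]=\mathbb{Z}[\theta]\oplus\mathbb{Z}[\theta]\zeta$ and $\mathbb{Z}[\zeta]=\mathcal{O}_K\oplus\mathcal{O}_K\zeta$, and intersecting with $K$ (projecting onto the coefficient of $1$) gives $\mathcal{O}_K=\mathbb{Z}[\theta]$. In fact this projection argument works directly, without any discriminant computation, as follows. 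Any $\alpha\in\mathcal{O}_K$ lies in $\mathbb{Z}[\zeta]=\mathbb{Z}[\theta]+\mathbb{Z}[\theta]\zeta$, so $\alpha=a+b\zeta$ with $a,b\in\mathbb{Z}[\theta]\subset K$. Because $\zeta\notin K$ and $1,\zeta$ are linearly independent over $K$, we get $b=0$, hence $\alpha=a\in\mathbb{Z}[\theta]$.

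\textbf{Step 3: the decomposition $\mathbb{Z}[\zeta]=\mathbb{Z}[\theta]+\mathbb{Z}[\theta]\zeta$.} This is the main point to verify. It suffices to show every power $\zeta^k$ lies in the right-hand side $M$. The set $M$ contains $1$ and $\zeta$, and $M$ is closed under multiplication by $\zeta$ because $\zeta\cdot\zeta=\theta\zeta+1\in M$. Hence $M$ contains all $\zeta^k$ with $k\ge0$, and $\mathbb{Z}[\zeta]$ is spanned by these powers since $\zeta^{-1}=\zeta^{n-1}$. Finally, $\mathbb{Z}[\zeta]=\mathcal{O}_{\mathbb{Q}(\zeta)}$ is the standard fact, and $\mathcal{O}_K\subset\mathcal{O}_{\mathbb{Q}(\zeta)}$. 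The only genuine input from the hypothesis $4\mid n$, $n>4$ is $\zeta\notin K$, which is Lemma \ref{n4bsdegreeofsubfield1}. I therefore expect no serious obstacle. The step needing care is that $b\in K$ together with $\zeta\notin K$ really forces $b=0$: if $b\neq0$, then $\zeta=(\alpha-a)/b\in K$, a contradiction.
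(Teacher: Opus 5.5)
Your proof is correct, and the version you actually rely on takes a genuinely different route from the paper. That version is the direct projection argument at the end of Step~2 together with Step~3.

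\textbf{The paper's route.} The paper uses the standard leading-coefficient argument familiar from $\mathbb{Z}[\zeta_n+\zeta_n^{-1}]$. It writes a putative $\alpha\in\mathcal{O}_K$ as $\sum_{k\le N}a_k\theta^k$ with $a_N\notin\mathbb{Z}$ and multiplies by $\zeta^N$. All exponents of $\zeta$ then lie in $[0,2N]$ with $2N\le\varphi(n)-2$. So $a_N$ is the coefficient of $\zeta^{2N}$ in the integral power basis $1,\zeta,\dots,\zeta^{\varphi(n)-1}$, hence an integer, a contradiction.

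\textbf{Your route.} You use $\zeta^2=\theta\zeta+1$ to get $\mathbb{Z}[\zeta]=\mathbb{Z}[\theta]+\mathbb{Z}[\theta]\zeta$. You then descend to $K$ using the $K$-linear independence of $1$ and $\zeta$.

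\textbf{Comparison.} Both proofs use exactly the same two inputs: $\mathcal{O}_{\mathbb{Q}(\zeta)}=\mathbb{Z}[\zeta]$, and $[\mathbb{Q}(\zeta):K]=2$ from Lemma~\ref{n4bsdegreeofsubfield1}. The paper needs the degree fact as the exponent bound $t-1=\varphi(n)/2-1$; you need it as $\zeta\notin K$. Your version avoids the exponent bookkeeping and also shows that $\mathcal{O}_{\mathbb{Q}(\zeta)}$ is free over $\mathcal{O}_K$ on $\{1,\zeta\}$. It also generalizes verbatim. Suppose $\mathcal{O}_L=\mathbb{Z}[\zeta]$, $K=\mathbb{Q}(\theta)\subseteq L$ with $\theta$ integral, and $\zeta$ is a root of a monic polynomial over $\mathbb{Z}[\theta]$ of degree $[L:K]$. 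Then $\mathcal{O}_K=\mathbb{Z}[\theta]$. In particular, this reproves $\mathcal{O}_{\mathbb{Q}(\zeta)^+}=\mathbb{Z}[\zeta+\zeta^{-1}]$ in one line. The paper's proof is more computational but stays entirely inside the power basis of $\zeta$.

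\textbf{Discarded material.} You should delete the preliminary material you do not use, because it contains two slips.
\begin{itemize}
\item With $i=\zeta^m$, one has $-i\zeta=\zeta^{1-m}$, not $-\zeta^{1-m}$.
\item The displayed index identity in Step~2 is wrong. The tower formula actually gives $[\mathcal{O}_K:\mathbb{Z}[\theta]]^4\,|N_{K/\mathbb{Q}}(\theta^2+4)|=N(\mathfrak{d})$.
\end{itemize}
The divisibility that matters there is $\mathfrak{d}\mid(\theta^2+4)\mathcal{O}_K$, and it holds automatically. This is because $\theta^2+4$ is the discriminant of the $K$-basis $\{1,\zeta\}$ of $\mathbb{Q}(\zeta)$, which consists of integers. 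So the discriminant route would also close immediately, with no proviso.
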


\begin{remark}
 Nakahara and Shah \cite{NakaharaMonogenesis2002} showed that $\mathbb{Z}[\zeta_m-\zeta_m^{-1}]$ is the full ring of integers for $m = 2^n \ge 8$ or $m = 4p^n$ with $p$ an odd prime. Our Proposition \ref{comfieldmonolucas} extends this to every $m$ divisible by $4$ and greater than $4$, using a different method.
\end{remark}
\begin{proof}
	Assume $4 \mid n$ and $n > 4$. By Lemma \ref{n4bsdegreeofsubfield1}, $[\mathbb{Q}(\zeta_{n}) : \mathbb{Q}(\zeta_{n}-\zeta_{n}^{-1})] = 2$. Suppose $[\mathbb{Q}(\zeta_{n}-\zeta_{n}^{-1}) : \mathbb{Q}] = t$, then $t = \frac{1}{2}\varphi(n)$.
	
	Suppose for the sake of contradiction that $\mathbb{Z}[\zeta_{n}-\zeta_{n}^{-1}]$ is not the full ring of integers. Then there exists an algebraic integer $\alpha \in \mathcal{O}_{\mathbb{Q}(\zeta_{n}-\zeta_{n}^{-1})}$ with rational, non-integer coefficients in the basis $\{1, \zeta_{n}-\zeta_{n}^{-1}, \dots, (\zeta_{n}-\zeta_{n}^{-1})^{t-1}\}$, say \[
	\alpha = \sum_{k=0}^{t-1} a_k (\zeta_n-\zeta_n^{-1})^k.
	\]
	
	Let $N \le t - 1 = \frac{1}{2}\varphi(n) - 1$ be the largest index such that its coefficient $a_N \notin \mathbb{Z}$. By subtracting $\sum_{k=N+1}^{t-1} a_k(\zeta_{n}-\zeta_{n}^{-1})^k \in \mathbb{Z}[\zeta_{n}-\zeta_{n}^{-1}]$ from $\alpha$, we may assume without loss of generality that
	\[
	\alpha = a_0 + a_1(\zeta_{n}-\zeta_{n}^{-1}) + \cdots + a_N(\zeta_{n}-\zeta_{n}^{-1})^N
	\]
	where $a_N \notin \mathbb{Z}$ and $a_i \in \mathbb{Q}$ for $0 \le i \le N$. Note that $\alpha$ is also an algebraic integer in $\mathbb{Q}(\zeta_{n})$. 
	
	Multiplying by $\zeta_n^N$ and expanding the result as a polynomial in $\zeta_n$, we obtain
	\[
	\begin{aligned}
		\zeta_n^N \alpha &= \sum_{k=0}^{N} a_{k} \zeta_{n}^N \sum_{j=0}^{k} \binom{k}{j} \zeta_{n}^{k-j}(-\zeta_{n}^{-1})^j \\
		&= \sum_{k=0}^{N} a_{k} \sum_{j=0}^{k} (-1)^j \binom{k}{j} \zeta_{n}^{N+k-2j} \\
		&= a_{N} \sum_{j=0}^{N} (-1)^j \binom{N}{j} \zeta_{n}^{2N-2j} + \sum_{k=0}^{N-1} a_{k} \sum_{j=0}^{k} (-1)^j \binom{k}{j} \zeta_{n}^{N+k-2j} 
		.
	\end{aligned}
	\]
	Since both $\zeta_n$ and $\alpha$ are algebraic integers, their product $\zeta_n^N \alpha \in \mathbb{Z}[\zeta_n]$. Because $N \leq \frac{1}{2}\varphi(n) - 1$, the highest exponent of $\zeta_n$ in this expansion is $2N \leq \varphi(n) - 2 < \varphi(n)$. 
	
	The set $\{1, \zeta_n, \dots, \zeta_n^{\varphi(n)-1}\}$ forms an integral basis for $\mathbb{Q}(\zeta_n)$. Since all powers of $\zeta_n$ in our expansion are strictly less than $\varphi(n)$, the representation of $\zeta_n^N \alpha$ in this basis is already unique. 
	 This strictly forces the leading coefficient $a_N$ to be in $\mathbb{Z}$, which contradicts our assumption.
	
	Thus, all coefficients must be integers. The set $\{1, \zeta_{n}-\zeta_{n}^{-1}, \dots, (\zeta_{n}-\zeta_{n}^{-1})^{t-1}\}$ is an integral basis, meaning $\mathbb{Z}[\zeta_{n}-\zeta_{n}^{-1}]$ is the ring of integers of $\mathbb{Q}(\zeta_{n}-\zeta_{n}^{-1})$, making it monogenic.
\end{proof}

\section{Monogenity of $\mathbb{Q}(i(\zeta_{2n}+\zeta_{2n}^{-1}))$}

In this section, we establish the following result.
\begin{proposition}\label{comfieldmono}
	Assume $n$ is an odd positive integer, $\mathbb{Z}[i(\zeta_{2n}+\zeta_{2n}^{-1})]$ is the ring of integers of $\mathbb{Q}(i(\zeta_{2n}+\zeta_{2n}^{-1}))$. In particular,
	the field $\mathbb{Q}(i(\zeta_{2n}+\zeta_{2n}^{-1}))$ is monogenic.
\end{proposition}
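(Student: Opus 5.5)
The plan is to reduce Proposition~\ref{comfieldmono} to Proposition~\ref{comfieldmonolucas} by writing $i(\zeta_{2n}+\zeta_{2n}^{-1})$ in the form $\omega-\omega^{-1}$ for a suitable primitive $4n$-th root of unity $\omega$. Put $\omega = i\,\zeta_{2n}$. Then $\omega^{-1} = -i\,\zeta_{2n}^{-1}$, so
\[
\omega-\omega^{-1} = i\zeta_{2n} + i\zeta_{2n}^{-1} = i(\zeta_{2n}+\zeta_{2n}^{-1}).
\]

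Next I check that $\omega$ is a primitive $4n$-th root of unity when $n$ is odd. As noted in Section 2, $-\zeta_n$ is a primitive $2n$-th root of unity. Hence $\zeta_{2n} = -\zeta_n^{c}$ for some $c$ coprime to $n$; explicitly, $\zeta_{2n}=-\zeta_n^{(n+1)/2}$. This gives $\omega = (-i)\,\zeta_n^{c}$. Here $-i$ has order $4$ and $\zeta_n^{c}$ has order $n$. Since $\gcd(4,n)=1$, the product has order $4n$. Therefore $\omega=\zeta_{4n}^{a}$ for some $a\in(\mathbb{Z}/4n\mathbb{Z})^{*}$.

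Now set $m=4n$, so that $4\mid m$, and $m>4$ exactly when $n>1$. Proposition~\ref{comfieldmonolucas} then says that $\mathbb{Z}[\zeta_m-\zeta_m^{-1}]$ is the ring of integers of $\mathbb{Q}(\zeta_m-\zeta_m^{-1})$. Apply the automorphism $\sigma_a\in\mathrm{Gal}(\mathbb{Q}(\zeta_m)/\mathbb{Q})$, which sends $\zeta_m\mapsto\zeta_m^a=\omega$. It maps the field $\mathbb{Q}(\zeta_m-\zeta_m^{-1})$ isomorphically onto $\mathbb{Q}(\omega-\omega^{-1})$. It also maps the ring of integers of the first field onto the ring of integers of the second, because field automorphisms preserve integrality. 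Finally it maps $\mathbb{Z}[\zeta_m-\zeta_m^{-1}]$ onto $\mathbb{Z}[\omega-\omega^{-1}]$. Hence
\[
\mathbb{Z}[i(\zeta_{2n}+\zeta_{2n}^{-1})]=\mathbb{Z}[\omega-\omega^{-1}]=\mathcal{O}_{\mathbb{Q}(\omega-\omega^{-1})},
\]
which is the claim.

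The only real obstacle is the edge case $n=1$, where $m=4$ falls outside Proposition~\ref{comfieldmonolucas}. In that case $\zeta_2=-1$ and $i(\zeta_2+\zeta_2^{-1})=-2i$. But $\mathbb{Z}[2i]\ne\mathbb{Z}[i]$, so the statement as written actually fails for $n=1$. I would therefore state and prove the proposition for odd $n>1$, and add a remark that $n=1$ is degenerate. This costs nothing later: $F_1(x)=1$ has no irreducible factors, so Theorem~\ref{monofibo} does not need the case $n=1$.
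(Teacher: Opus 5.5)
Your argument is correct for odd $n>1$ and takes a genuinely different route from the paper. The paper does not use Section~3 here. It rewrites the generator as $-i(\zeta_n+\zeta_n^{-1})$ and identifies the field with the compositum $\mathbb{Q}(i)\cdot\mathbb{Q}(\zeta_n+\zeta_n^{-1})$. It then combines the theorem on composita with coprime discriminants and $\mathcal{O}_{\mathbb{Q}(\zeta_n+\zeta_n^{-1})}=\mathbb{Z}[\zeta_n+\zeta_n^{-1}]$ to get $\mathcal{O}_K=\mathbb{Z}[i,\zeta_n+\zeta_n^{-1}]$. Finally, three-term recursions and the relation $1+\sum_{k=1}^{(n-1)/2}(\zeta_n^k+\zeta_n^{-k})=0$ show that $i$ and $\zeta_n+\zeta_n^{-1}$ already lie in $\mathbb{Z}[i(\zeta_n+\zeta_n^{-1})]$.

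You instead note that $i(\zeta_{2n}+\zeta_{2n}^{-1})=\omega-\omega^{-1}$, where $\omega=i\zeta_{2n}=\zeta_{4n}^{n+2}$ has order $4n$. The claim is then the case $m=4n$ of Proposition~\ref{comfieldmonolucas}, transported by $\sigma_a$. This is legitimate, since the leading-coefficient argument proving Proposition~\ref{comfieldmonolucas} is sound.

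Your route makes Section~4 a corollary of Section~3, and shows that both main theorems rest on that one proposition. Every root $2i\cos(k\pi/n)$ of $F_n$ equals $\zeta_{4n}^{n+2k}-\zeta_{4n}^{-(n+2k)}$. So Theorem~\ref{monofibo} uses the moduli $m=4d$ with $d$ odd, while Theorem~\ref{monolucas} uses those with $8\mid m$. This needs only $\mathcal{O}_{\mathbb{Q}(\zeta_m)}=\mathbb{Z}[\zeta_m]$ and Lemma~\ref{n4bsdegreeofsubfield1}. The paper's route is independent of Section~3 and yields the extra description $\mathcal{O}_K=\mathbb{Z}[i,\zeta_n+\zeta_n^{-1}]$. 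Its cost is citing the coprime-discriminant compositum theorem and the ring of integers of the real cyclotomic field.

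Your remark on $n=1$ is also right: $\mathbb{Z}[i(\zeta_2+\zeta_2^{-1})]=\mathbb{Z}[2i]$ has index $2$ in $\mathbb{Z}[i]$. The paper's ``trivial'' case therefore only shows that the field $\mathbb{Q}(i)$ is monogenic, not the stated equality. Restricting to $n>1$ costs nothing, because the proof of Theorem~\ref{monofibo} only invokes the proposition for divisors $d>1$.
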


\begin{remark} Proposition \ref{comfieldmono} was already proved by Motoda, Nakahara and Shah \cite{motodaProblemHasseCertain2002} by showing that the norm of $i(\zeta_{2n}+\zeta_{2n}^{-1})$ coincides with the discriminant of the field. We use a different approach to prove that the ring of integers is directly generated by $i(\zeta_{2n}+\zeta_{2n}^{-1})$. 
\end{remark}
The case $n=1$ is trivial, since $\mathbb{Q}(i(\zeta_2+\zeta_2^{-1})) = \mathbb{Q}(i)$, whose ring of integers is precisely $\mathbb{Z}[i]$. Hence, in the following preparation, we assume $n > 1$.

Since $n$ is odd, we can represent the primitive $2n$-th root of unity as $\zeta_{2n} = -\zeta_n$. Then
\[
\zeta_{2n} + \zeta_{2n}^{-1} = -\zeta_n - \zeta_n^{-1} = -(\zeta_n + \zeta_n^{-1}).
\]
Therefore, $i(\zeta_{2n} + \zeta_{2n}^{-1}) = -i(\zeta_n + \zeta_n^{-1})$. 
Thus, it suffices to prove that the ring of integers of $K := \mathbb{Q}(i(\zeta_n + \zeta_n^{-1}))$ is $\mathbb{Z}[i(\zeta_n + \zeta_n^{-1})]$. We need some lemmas as follows.

\begin{lemma}\label{lem5}
	For all $k\geq 0$, we have $\zeta_n^{2k}+\zeta_n^{-2k}\in \mathbb{Z}[i(\zeta_n+\zeta_n^{-1})]$.
\end{lemma}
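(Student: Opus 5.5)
Set $\theta := i(\zeta_n+\zeta_n^{-1})$ and, for $k\ge 0$, $s_k := \zeta_n^{2k}+\zeta_n^{-2k}$. The plan is to show that every $s_k$ can be written as an integer polynomial in $\theta^2$, which lies in $\mathbb{Z}[\theta]$. First compute
\[
\theta^2 = -(\zeta_n+\zeta_n^{-1})^2 = -(\zeta_n^2+\zeta_n^{-2}) - 2 = -s_1 - 2 .
\]
Hence $s_1 = -\theta^2-2 \in \mathbb{Z}[\theta]$, and trivially $s_0 = 2 \in \mathbb{Z}[\theta]$.

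Next, run a strong induction on $k$ using the standard three-term relation for the sums $s_k$. Multiplying out gives
\[
s_1 s_k = (\zeta_n^{2}+\zeta_n^{-2})(\zeta_n^{2k}+\zeta_n^{-2k}) = s_{k+1} + s_{k-1}, \qquad k\ge 1,
\]
so that
\[
s_{k+1} = s_1 s_k - s_{k-1}.
\]
If $s_{k-1}$ and $s_k$ lie in $\mathbb{Z}[\theta]$, then so does $s_{k+1}$, because $\mathbb{Z}[\theta]$ is a ring and $s_1\in\mathbb{Z}[\theta]$. With the base cases $s_0$ and $s_1$ already established, this gives the claim for all $k\ge0$.

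There is essentially no obstacle here. The only points needing care are the sign in $\theta^2$, where $i^2=-1$ contributes the minus sign, and the observation that $\theta^2\in\mathbb{Z}[\theta]$. Concretely, $s_k = (-1)^k\,2T_k\!\left(\tfrac{\theta^2+2}{2}\right)$ up to the Chebyshev normalization. The recursion is cleaner to present than this closed form, so I would state only the recursion.
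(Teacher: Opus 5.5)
Your proposal is correct and follows essentially the same route as the paper. The paper likewise obtains $\zeta_n^2+\zeta_n^{-2}$ from $[i(\zeta_n+\zeta_n^{-1})]^2=-(\zeta_n^2+\zeta_n^{-2}+2)$ and then inducts with the same three-term relation $s_{k+1}=s_1s_k-s_{k-1}$.
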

\begin{proof}
	Let $A = \mathbb{Z}[i(\zeta_n+\zeta_n^{-1})]$. The result is trivial when $k=0$.
	For $k=1$, note that 
	\[ [i(\zeta_n+\zeta_n^{-1})]^2 = -(\zeta_n^2+\zeta_n^{-2}+2), \]
	which implies $\zeta_n^2+\zeta_n^{-2} \in A$.
	Assume the assertion holds for all $k < j$, where $j\geq 2$. Then:
	\[ \zeta_n^{2j}+\zeta_n^{-2j} = (\zeta_n^{2(j-1)}+\zeta_n^{-2(j-1)})(\zeta_n^2+\zeta_n^{-2}) - (\zeta_n^{2(j-2)}+\zeta_n^{-2(j-2)}). \]
	By the induction hypothesis, all terms on the right-hand side belong to $A$, hence $\zeta_n^{2j}+\zeta_n^{-2j} \in A$. By induction, Lemma \ref{lem5} holds for all $k\geq 0$.
\end{proof}

\begin{corollary}\label{cor1}
	For all $k\in \mathbb{Z}$, we have $\zeta_n^{k}+\zeta_n^{-k}\in \mathbb{Z}[i(\zeta_n+\zeta_n^{-1})]$.
\end{corollary}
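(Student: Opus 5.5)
The plan is to deduce the corollary from Lemma \ref{lem5} using only that $n$ is odd, which is the standing assumption of this section. Put $A=\mathbb{Z}[i(\zeta_n+\zeta_n^{-1})]$.

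First we reduce to nonnegative exponents. The expression $\zeta_n^{k}+\zeta_n^{-k}$ is symmetric under $k\mapsto -k$, so it is enough to treat $k\ge 0$.

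Next we rewrite the exponent as an even one. Because $\zeta_n^n=1$, the value of $\zeta_n^{k}+\zeta_n^{-k}$ depends only on $k \bmod n$. Since $n$ is odd, $2$ is invertible modulo $n$. Hence for each $k$ there is an integer $m\ge 0$ with $2m\equiv k \pmod n$; for instance $m=k(n+1)/2$ works. This gives
\[
\zeta_n^{k}+\zeta_n^{-k}=\zeta_n^{2m}+\zeta_n^{-2m}.
\]
The right-hand side lies in $A$ by Lemma \ref{lem5}, and this proves the corollary.

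The only point that needs care is the choice of $m$, which must be a nonnegative integer solving $2m\equiv k \pmod n$. This step is where the oddness of $n$ is used, and $m=k(n+1)/2$ meets both requirements.
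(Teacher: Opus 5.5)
Your proof is correct and is essentially the paper's argument: because $n$ is odd, $2$ is invertible modulo $n$, so the exponent can be rewritten as an even one and Lemma \ref{lem5} applies. The only cosmetic difference is that you reduce to $k\ge 0$ first and give the explicit choice $m=k(n+1)/2$, while the paper flips the sign of its $t_1$ afterward.
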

\begin{proof}
	Since $n$ is odd, $2$ is invertible modulo $n$. For any $t \in \mathbb{Z}$, there exists an integer $t_1$ such that $t \equiv 2t_1 \pmod{n}$. Thus, $\zeta_n^t+\zeta_n^{-t} = \zeta_n^{2t_1}+\zeta_n^{-2t_1}$. Replacing $t_1$ by $-t_1$ if necessary, we may assume $t_1 \ge 0$, and the conclusion follows directly from Lemma \ref{lem5}.
\end{proof}

\begin{lemma}\label{lem7}
	For all $k\geq 0$, we have $i(\zeta_n^{2k+1}+\zeta_n^{-(2k+1)}) \in \mathbb{Z}[i(\zeta_n+\zeta_n^{-1})]$.
\end{lemma}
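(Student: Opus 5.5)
Let $A=\mathbb{Z}[i(\zeta_n+\zeta_n^{-1})]$ and $\theta=i(\zeta_n+\zeta_n^{-1})$. The approach is induction on $k$, mirroring Lemma \ref{lem5}: we multiply a known odd-index element by a known even-index symmetric sum to obtain the next odd-index element. The base case $k=0$ is simply $\theta\in A$.

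For the inductive step, we use the identity
\[
i(\zeta_n^{2k+1}+\zeta_n^{-(2k+1)})(\zeta_n^{2}+\zeta_n^{-2}) = i(\zeta_n^{2k+3}+\zeta_n^{-(2k+3)}) + i(\zeta_n^{2k-1}+\zeta_n^{-(2k-1)}),
\]
which holds for all $k\ge 0$. When $k=0$, the last term is $i(\zeta_n^{-1}+\zeta_n)=\theta$, so it is already known to lie in $A$.

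Assume the statement for all indices up to $k$. The left-hand side is then a product of an element of $A$ (by the induction hypothesis) and $\zeta_n^2+\zeta_n^{-2}$, which lies in $A$ by Lemma \ref{lem5}; hence the product is in $A$. The second term on the right lies in $A$ by the induction hypothesis, using $\theta$ itself when $k=0$. Subtracting, we conclude $i(\zeta_n^{2k+3}+\zeta_n^{-(2k+3)})\in A$, which completes the induction.

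There is no real obstacle here. The only point requiring care is the bookkeeping at $k=0$, where the "previous" term has negative exponent $2k-1=-1$. This is harmless because the expression $i(\zeta_n^{m}+\zeta_n^{-m})$ is symmetric under $m\mapsto -m$. We could alternatively take $\theta$ and $\theta(\zeta_n^2+\zeta_n^{-2})-\theta$ as two base cases and run a standard two-step recursion. Note that oddness of $n$ is not needed for this lemma; it is only relevant in Corollary \ref{cor1}.
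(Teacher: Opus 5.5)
Your proposal is correct and follows essentially the same route as the paper: induction on $k$ using $\zeta_n^2+\zeta_n^{-2}\in A$ from Lemma \ref{lem5} and the three-term recursion $i(\zeta_n^{2k+3}+\zeta_n^{-(2k+3)}) = i(\zeta_n^{2k+1}+\zeta_n^{-(2k+1)})(\zeta_n^2+\zeta_n^{-2}) - i(\zeta_n^{2k-1}+\zeta_n^{-(2k-1)})$. The only difference is bookkeeping: the paper handles $k=1$ as a separate base case, whereas you absorb it into the recursion via the symmetry $m\mapsto -m$. Your remark that oddness of $n$ is not used here is also accurate.
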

\begin{proof}
	Let $A = \mathbb{Z}[i(\zeta_n+\zeta_n^{-1})]$. By Lemma \ref{lem5}, $\zeta_n^2+\zeta_n^{-2} \in A$.
	The case $k=0$ is clear since $i(\zeta_n+\zeta_n^{-1}) \in A$.
	For $k=1$, we have:
	\[ i(\zeta_n^3+\zeta_n^{-3}) = i(\zeta_n+\zeta_n^{-1})(\zeta_n^2+\zeta_n^{-2}) - i(\zeta_n+\zeta_n^{-1}) \in A. \]
	Assume the assertion holds for all $0 \leq k \leq j-1$ for $j \geq 2$. Then:
	\[ i(\zeta_n^{2j+1}+\zeta_n^{-(2j+1)}) = i(\zeta_n^{2j-1}+\zeta_n^{-(2j-1)})(\zeta_n^2+\zeta_n^{-2}) - i(\zeta_n^{2j-3}+\zeta_n^{-(2j-3)}). \] 
	Both terms on the right-hand side belong to $A$, so the left-hand side also belongs to $A$. This proves the lemma by induction.
\end{proof}

\begin{corollary}\label{cor2}
	For all $k\in \mathbb{Z}$, we have $i(\zeta_n^{k}+\zeta_n^{-k})\in \mathbb{Z}[i(\zeta_n+\zeta_n^{-1})]$.
\end{corollary}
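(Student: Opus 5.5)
The plan is to mirror the proof of Corollary \ref{cor1}, now building on Lemma \ref{lem7} and the parity trick that is available because $n$ is odd. Put $A=\mathbb{Z}[i(\zeta_n+\zeta_n^{-1})]$. The expression $i(\zeta_n^k+\zeta_n^{-k})$ depends only on $k \bmod n$, and it is unchanged when $k$ is replaced by $-k$. So it is enough to show that every residue class modulo $n$ contains an odd integer $2t+1$ with $t\ge 0$, and then apply Lemma \ref{lem7}.

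\textbf{Reduction to an odd representative.} Take any $k\in\mathbb{Z}$.

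\emph{Odd case.} First use the symmetry $k\mapsto -k$ to assume $k\ge 0$. If $k$ is odd, write $k=2t+1$ with $t\ge 0$; then Lemma \ref{lem7} gives $i(\zeta_n^k+\zeta_n^{-k})\in A$ at once.

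\emph{Even case.} If $k$ is even, replace $k$ by $k+n$. Since $n$ is odd, $k+n$ is odd, and since $k\ge 0$ we have $k+n\ge 0$. Moreover $\zeta_n^{k+n}=\zeta_n^k$ and $\zeta_n^{-(k+n)}=\zeta_n^{-k}$, so
\[
i(\zeta_n^k+\zeta_n^{-k}) = i(\zeta_n^{k+n}+\zeta_n^{-(k+n)}).
\]
Writing $k+n=2t+1$ with $t\ge 0$, Lemma \ref{lem7} again shows this element lies in $A$.

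The special case $k=0$ is covered by the even case. It gives $2i=i(\zeta_n^n+\zeta_n^{-n})$, and this lies in $A$ because $n$ is odd and positive.

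\textbf{Main point to check.} The argument has no real obstacle. The one step to verify is that shifting by $n$ changes the parity of the exponent without changing the element. This uses exactly that $n$ is odd, which is the standing hypothesis of this section.
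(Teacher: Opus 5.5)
Your proof is correct and is essentially the paper's argument. The paper picks an odd representative $m \equiv k \pmod n$ (possible because $n$ is odd), flips its sign if needed to make it positive, and applies Lemma \ref{lem7}; you do the sign reduction first and then make the odd representative explicit as $k$ or $k+n$, which is the same idea in the opposite order.
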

\begin{proof}
	Since $n$ is odd, every residue class modulo $n$ can be represented by an odd integer. Hence, for every $k \in \mathbb{Z}$, there exists an odd integer $m$ such that $k \equiv m \pmod{n}$. Thus, $\zeta_n^k+\zeta_n^{-k} = \zeta_n^m+\zeta_n^{-m}$. Replacing $m$ by $-m$ if necessary, we may assume $m > 0$. Then $m = 2r+1$ for some $r \geq 0$, and the conclusion follows from Lemma \ref{lem7}.
\end{proof}

For convenience, let $M,N,$ and $L$ denote the  fields $\mathbb{Q}(i), \mathbb{Q}(\zeta_n+\zeta_n^{-1}), $ and $ \mathbb{Q}(i, \zeta_n+\zeta_n^{-1})$, respectively.

\begin{proposition}\label{prop1}
	$K = L = MN$, i.e., $\mathbb{Q}(i(\zeta_n+\zeta_n^{-1})) = \mathbb{Q}(i, \zeta_n+\zeta_n^{-1})$.
\end{proposition}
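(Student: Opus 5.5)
The plan is to prove the two inclusions separately. Throughout, write $\theta = i(\zeta_n+\zeta_n^{-1})$ and $\eta = \zeta_n+\zeta_n^{-1}$; as in the preceding discussion, $n>1$ is odd.

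\emph{The easy inclusion.} We have $\theta = i\eta \in \mathbb{Q}(i,\eta) = L$, so $K \subseteq L$. Also $L = MN$ by definition, being the compositum of $M=\mathbb{Q}(i)$ and $N=\mathbb{Q}(\eta)$.

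\emph{The reverse inclusion.} It suffices to show that $i \in K$ and $\eta \in K$. By Corollary \ref{cor1}, every $\zeta_n^k+\zeta_n^{-k}$ lies in $\mathbb{Z}[\theta] \subseteq K$. Taking $k=1$ gives $\eta \in K$ directly; equivalently, $\eta = \zeta_n^{2t_1}+\zeta_n^{-2t_1}$ with $2t_1 \equiv 1 \pmod n$, which by Lemma \ref{lem5} is a polynomial in $\theta^2$.

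Now $\eta \neq 0$. Indeed, $\eta = 2\cos(2\pi/n)$, and this vanishes only when $n=4$, which is impossible for odd $n$. Hence
\[
i = \theta/\eta \in K .
\]
Therefore $L = \mathbb{Q}(i,\eta) \subseteq K$, and so $K = L$.

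\emph{Main obstacle.} There is essentially none. The one point to check is that Corollary \ref{cor1} really places $\eta$ itself, not merely $i\eta$, inside $\mathbb{Z}[\theta]$. This rests on $2$ being invertible modulo the odd number $n$, which is exactly what Corollary \ref{cor1} uses, so the parity hypothesis is doing the work.

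\emph{Optional degree count.} For later use one may also record $[K:\mathbb{Q}] = \varphi(n)$. Since $N$ is totally real, $i \notin N$, so $[L:N] = 2$ and $[L:\mathbb{Q}] = 2\cdot \tfrac{1}{2}\varphi(n) = \varphi(n)$. This is not needed for the equality itself.
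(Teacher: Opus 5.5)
Your proof is correct and rests on the same key input as the paper's, namely Corollary~\ref{cor1} placing $\eta=\zeta_n+\zeta_n^{-1}$ in $K$. The only difference is the last step. The paper argues via the tower $N\subseteq K\subseteq L$ with $[L:N]=2$ and $K\not\subseteq\mathbb{R}$, while you exhibit $i=\theta/\eta\in K$ directly, which needs only $\eta\neq 0$ and avoids the degree count.
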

\begin{proof}
	By Corollary \ref{cor1}, $\zeta_n+\zeta_n^{-1} \in \mathbb{Z}[i(\zeta_n+\zeta_n^{-1})] \subset K$, which means $N \subseteq K \subseteq L$. Since $N$ is totally real and $K \not\subseteq \mathbb{R}$ (as $i(\zeta_n+\zeta_n^{-1})$ is purely imaginary and non-zero for $n > 1$), $N \neq K$. Given that $[L:N] = 2$, it forces $[L:K] = 1$, and so $K = L = MN$.
\end{proof}

\begin{lemma}\label{lem6}
	The ring of integers of $K$ is $\mathcal{O}_K = \mathbb{Z}[i, \zeta_n+\zeta_n^{-1}]$.
\end{lemma}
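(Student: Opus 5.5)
We show $\mathcal{O}_K=\mathbb{Z}[i,\zeta_n+\zeta_n^{-1}]$ using the standard criterion for the ring of integers of a compositum with coprime discriminants. By Proposition \ref{prop1}, $K=L=MN$ with $M=\mathbb{Q}(i)$ and $N=\mathbb{Q}(\zeta_n+\zeta_n^{-1})$. The criterion says: if $M,N$ are number fields with $[MN:\mathbb{Q}]=[M:\mathbb{Q}][N:\mathbb{Q}]$ and $\gcd(d_M,d_N)=1$, then $\mathcal{O}_{MN}=\mathcal{O}_M\mathcal{O}_N$. So the plan is to check the two hypotheses and then identify $\mathcal{O}_M$ and $\mathcal{O}_N$.

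\textbf{Degree.} Since $N$ is totally real, $i\notin N$. Hence $[L:N]=2$, which gives $[MN:\mathbb{Q}]=2\cdot[N:\mathbb{Q}]=[M:\mathbb{Q}][N:\mathbb{Q}]$, as required.

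\textbf{Discriminants.} We have $d_M=-4$. For $N$, the tower $\mathbb{Q}\subset N\subset\mathbb{Q}(\zeta_n)$ and the formula $d_{\mathbb{Q}(\zeta_n)}=\pm d_N^{2}\,N_{N/\mathbb{Q}}(\mathfrak{d}_{\mathbb{Q}(\zeta_n)/N})$ show that $d_N$ divides $d_{\mathbb{Q}(\zeta_n)}$. This is the only point needing care. Since $n$ is odd, $d_{\mathbb{Q}(\zeta_n)}$ divides a power of $n$, so it is odd. Therefore $d_N$ is odd and $\gcd(d_M,d_N)=1$. Equivalently, only primes dividing $n$ ramify in $\mathbb{Q}(\zeta_n)\supseteq N$, so $2$ is unramified in $N$.

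\textbf{Rings of integers and conclusion.} We use $\mathcal{O}_M=\mathbb{Z}[i]$ and the classical fact $\mathcal{O}_N=\mathbb{Z}[\zeta_n+\zeta_n^{-1}]$. The criterion then gives $\mathcal{O}_K=\mathbb{Z}[i]\,\mathbb{Z}[\zeta_n+\zeta_n^{-1}]=\mathbb{Z}[i,\zeta_n+\zeta_n^{-1}]$.

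For the classical fact about $\mathcal{O}_N$, there is a quick argument in the style of Proposition \ref{comfieldmonolucas}. Take $\alpha\in\mathcal{O}_N$ and write it in the basis of powers of $\zeta_n+\zeta_n^{-1}$ up to degree $\varphi(n)/2-1$. Multiply by $\zeta_n^{N}$, where $N$ is the top index of a non-integral coefficient (the paper reuses $N$ for both the field and this index). The exponents that appear stay below $\varphi(n)$, and the leading coefficient is then forced to be integral. We could cite this or reprove it this way.

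The main obstacle is justifying the compositum criterion and the coprimality of discriminants. We cite the criterion as a standard result (e.g.\ Neukirch or Marcus) and derive coprimality from the fact that only primes dividing $n$ ramify in $\mathbb{Q}(\zeta_n)$.
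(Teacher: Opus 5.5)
Your proposal is correct and follows essentially the same route as the paper. It uses $d_M=-4$, observes that $d_N \mid d_{\mathbb{Q}(\zeta_n)}$ is odd because $n$ is odd, and applies the compositum theorem for coprime discriminants to get $\mathcal{O}_K=\mathbb{Z}[i]\,\mathbb{Z}[\zeta_n+\zeta_n^{-1}]$, taking $\mathcal{O}_N=\mathbb{Z}[\zeta_n+\zeta_n^{-1}]$ as known. Your explicit check of the degree hypothesis $[MN:\mathbb{Q}]=[M:\mathbb{Q}][N:\mathbb{Q}]$, which the paper leaves implicit, and your sketch proving the fact about $\mathcal{O}_N$, which the paper cites from Washington, are useful additions but do not change the argument.
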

\begin{proof}
Note that the discriminant of $M$ is $d_M = -4$. For the field $N$, we know that $|d_N| \mid |d_{\mathbb{Q}(\zeta_n)}| \mid n^{\phi(n)}$ (see \cite[Theorem 2.31 and Corollary 7.11]{MR4442264}). Since $n$ is odd, $d_N$ only has odd prime factors, which yields  $\gcd(d_M, d_N) = 1$. By the theorem on the compositum of fields with coprime discriminants (see \cite[Corollary 2.28]{MR4442264}), $\mathcal{O}_K = \mathcal{O}_M \cdot \mathcal{O}_N = \mathbb{Z}[i] \cdot \mathbb{Z}[\zeta_n+\zeta_n^{-1}] = \mathbb{Z}[i, \zeta_n+\zeta_n^{-1}]$. The fact that $\mathcal{O}_N = \mathbb{Z}[\zeta_n+\zeta_n^{-1}]$ can be found, for instance, in \cite[Proposition 2.16]{washington97}.
\end{proof}

Now we are ready to prove the main result.

\begin{proof}[Proof of Proposition \ref{comfieldmono}]
	As discussed, it suffices to show $\mathcal{O}_K = \mathbb{Z}[i(\zeta_n+\zeta_n^{-1})]$ for $n>1$. Since $n > 1$ is odd, the primitive $n$-th root of unity $\zeta_n$ satisfies:
	\[ 1+\sum_{1 \leq k \leq (n-1)/2}(\zeta_n^k+\zeta_n^{-k}) = 0. \]
	Multiplying by $i$, we can express $-i$ as:
	\[ -i = \sum_{1 \leq k \leq (n-1)/2}i(\zeta_n^k+\zeta_n^{-k}). \]
	By Corollary \ref{cor2}, every term in this sum is in $\mathbb{Z}[i(\zeta_n+\zeta_n^{-1})]$. Therefore, $-i \in \mathbb{Z}[i(\zeta_n+\zeta_n^{-1})]$, which implies $\mathbb{Z}[i] \subseteq \mathbb{Z}[i(\zeta_n+\zeta_n^{-1})]$.
	
	Combining this with Corollary \ref{cor1}, we see that both generators $i$ and $\zeta_n+\zeta_n^{-1}$ are contained in $\mathbb{Z}[i(\zeta_n+\zeta_n^{-1})]$. Using Lemma \ref{lem6}, we establish:
	\[ \mathcal{O}_K = \mathbb{Z}[i, \zeta_n+\zeta_n^{-1}] \subseteq \mathbb{Z}[i(\zeta_n+\zeta_n^{-1})]. \]
	Since the reverse inclusion $\mathbb{Z}[i(\zeta_n+\zeta_n^{-1})] \subseteq \mathcal{O}_K$ is trivially true, we conclude that $\mathcal{O}_K = \mathbb{Z}[i(\zeta_n+\zeta_n^{-1})]$. This completes the proof.
\end{proof}

\section{Proof of Theorem \ref{monofibo}}
For each divisor $d>1$ of $n$, we define 
\begin{equation*}\label{}
R_d : = \{
i(\zeta_{2n}^k + \zeta_{2n}^{-k}): (k,n)=n/d,  k\in\{
1,2,\cdots,n-1
\}
\} .
\end{equation*}
Let $m:=kd/n$. 
For any $k$ such that $(k,n)=n/d$, we have $(m,d)=1$. Thus $|R_d| = \varphi(d)$ and $R_a\cap R_b = \varnothing$ for $a\ne b$. By item \ref{zerofnpoly} of 
Lemma \ref{propertiesfnpoly}, we know that $R = \cup_d R_d$. Define \[
\Omega_d(x) : = \prod_{x_i\in R_d} (x-x_i).
\]
Rewrite $R_d$ as follows
\[R_d=\{i(\zeta_{2d}^k + \zeta_{2d}^{-k}), \, \text{where}\, 1\leq k \leq d, (k,d)=1\}.
\]
By Galois theory, the set of all conjugates of $i(\zeta_{2d} + \zeta_{2d}^{-1})$ over $\mathbb{Q}$ is 
\[\mathcal{S}_d := \{\pm i(\zeta_{2d}^k + \zeta_{2d}^{-k}), \, \text{where}\,   1\leq k \leq d,  (k,2d)=1\}. \]
When $1 \leq k \leq d, (k,d)=1$ and $k$ is odd, it follows that 
\[(k, 2d)=1;\] 
When $1 \leq k \leq d, (k,d)=1$ and $k$ is even, we have
\[1\leq d-k\leq d, (d-k, 2d)=1\] 
and 
\[i(\zeta_{2d}^{-k} + \zeta_{2d}^{k})
=-i(\zeta_{2d}^{d-k} + \zeta_{2d}^{-(d-k)}).
\]

Therefore, we obtain 
\[R_d=\mathcal{S}_d.\]
Thus $\Omega_d(x)= \prod_{x_i\in R_d} (x-x_i)$ is the minimal polynomial of the algebraic integer $i(\zeta_{2d} + \zeta_{2d}^{-1})$ over $\mathbb{Q}$ and $\Omega_d(x) \in \mathbb{Z}[x]$ is irreducible over $\mathbb{Q}$ by the characteristic of the minimal polynomial.

Now $\Omega_d(x)$ is monogenic by Proposition \ref{comfieldmono}.


\section{Proof of Theorem \ref{monolucas}}
Let $n$ be an even positive integer. For each odd divisor $g$ of $n$, we define $d = 2n/g$. 
Note that since $n$ is even and $g$ is odd, $d$ is always a multiple of $4$. 
We define 
\[
R_d := \left\{ \zeta_{4n}^{2k+1} - \zeta_{4n}^{-(2k+1)} : (2k+1, 4n) = 2n/d,  k \in \{-n/2, \dots, n/2-1\} \right\}.
\]
Let $z = \frac{(2k+1)d}{2n}$. For any $k$ such that $(2k+1, 4n) = 2n/d$, we should have $(z, 2d)=1$. Thus $|R_d| = \varphi(2d)/2$ and $R_a \cap R_b = \varnothing$ for $a \ne b$. 

By item \ref{zerolucaspoly} of Lemma \ref{propertieslucaspoly}, we know that $R = \cup_{d} R_d$. Define 
\[
\Omega_d(x) := \prod_{x_i \in R_d} (x-x_i).
\]
Rewrite $R_d$ as follows
\[
R_d = \left\{ \zeta_{2d}^z - \zeta_{2d}^{-z}, \, \text{where} \, -d/2 < z < d/2, (z, 2d)=1 \right\}.
\]
By Galois theory, the set of all conjugates of $\zeta_{2d} - \zeta_{2d}^{-1}$ over $\mathbb{Q}$ is 
\[
\mathcal{S}_d := \left\{ \zeta_{2d}^z - \zeta_{2d}^{-z}, \, \text{where} \, 1 \le z < 2d, (z, 2d)=1 \right\}.
\]
When $1 \le z < 2d, (z, 2d)=1$ and $z$ falls outside the interval $(-d/2, d/2)$, it must be that $d/2 < z < 3d/2$. In this case, we have
\[ -d/2 < d-z < d/2, \quad (d-z, 2d)=1 \]
and 
\[
\zeta_{2d}^{d-z} - \zeta_{2d}^{-(d-z)} = \zeta_{2d}^d \zeta_{2d}^{-z} - \zeta_{2d}^{-d} \zeta_{2d}^z = -\zeta_{2d}^{-z} - (-\zeta_{2d}^z) = \zeta_{2d}^z - \zeta_{2d}^{-z}.
\]

Therefore, we obtain 
\[ R_d = \mathcal{S}_d. \]
Thus $\Omega_d(x) = \prod_{x_i \in R_d} (x-x_i)$ is the minimal polynomial of the algebraic integer $\zeta_{2d} - \zeta_{2d}^{-1}$ over the rational numbers and $\Omega_d(x) \in \mathbb{Z}[x]$ is irreducible over $\mathbb{Q}$ by the characteristic of the minimal polynomial.

Now $\Omega_d(x)$ is monogenic by Lemma \ref{comfieldmonolucas}.

\section{Concluding Remarks and Open Problems}

Theorems \ref{monofibo} and \ref{monolucas} give complete monogenity results for the irreducible factors of $F_n(x)$ when $n$ is odd and of $L_n(x)$ when $n$ is even. 
Numerical experiments for $n \le 100$ indicate that the non-monogenic factors in remaining cases do not have a naive density, suggesting a deeper obstruction that warrants further study. We propose several related problems.

\begin{problem}
	Determine a necessary and sufficient condition on an even integer $n$ such that every irreducible factor of the Fibonacci polynomial $F_n(x)$ is monogenic.
\end{problem}

\begin{problem}
	Determine a necessary and sufficient condition on an odd integer $n$ such that every irreducible factor of the Lucas polynomial $L_n(x)$ is monogenic.
\end{problem}

\begin{problem}
	More generally, given the families $F_n(x)$ and $L_n(x)$ for $n \ge 1$, is there an algorithmic criterion to decide exactly when all of their irreducible factors are monogenic? In particular, can such a criterion be completely described solely in terms of the prime factorization and the parity of $n$?
\end{problem}

{\footnotesize
	
	\bibliographystyle{amsplain}
	\bibliography{monoseq}
	
		\paragraph{Han Chen,\, Weizhe Guo,\, Haojie Hong} 
	~\\
	School of Mathematics and Statistics, Hainan University, No. 58 Renmin Avenue, Haikou 570228, PR, China\\
	E-mail: hchen@hainanu.edu.cn, \,24210701000007@hainanu.edu.cn,\, hjhong@hainanu.edu.cn

}
\end{document}